\definecolor{labelkey}{rgb}{0,0.08,0.45}
\definecolor{refkey}{rgb}{0,0.6,0.0}
\definecolor{Brown}{rgb}{0.45,0.0,0.05}
\newcommand{\HH}{\ensuremath{X}}
\newcommand{\gr}{\ensuremath{\operatorname{gra}}}
\newcommand{\scal}[2]{\langle{{#1},{#2}}\rangle}
\newcommand{\NN}{\ensuremath{\mathbb N}}
\newcommand{\nnn}{\ensuremath{{n \in \NN}}}
\newcommand{\thalb}{\ensuremath{\tfrac{1}{2}}}
\newcommand{\menge}[2]{\big\{{#1} \mid {#2}\big\}}
\newcommand{\To}{\ensuremath{\rightrightarrows}}
\newcommand{\zer}{\ensuremath{\operatorname{zer}}}
\newcommand{\Fix}{\ensuremath{\operatorname{Fix}}}
\newcommand{\Id}{\ensuremath{\operatorname{Id}}}
\newcommand{\weakly}{\ensuremath{\,\rightharpoonup}\,}
\renewcommand{\phi}{\ensuremath{\varphi}}
\newcommand{\bX}{\ensuremath{\mathbf{X}}}
\newcommand{\bD}{\ensuremath{\mathbf{D}}}
\newcommand{\bF}{\ensuremath{\mathbf{F}}}
\newcommand{\by}{\ensuremath{\mathbf{y}}}
\newcommand{\bx}{\ensuremath{\mathbf{x}}}
\newcommand{\bz}{\ensuremath{\mathbf{z}}}
\newcommand{\bC}{\ensuremath{\mathbf{C}}}
\newtheorem{theorem}{Theorem}[section]
\newtheorem{fact}[theorem]{Fact}
\newtheorem{corollary}[theorem]{Corollary}
\theoremstyle{plain}{\theorembodyfont{\rmfamily}
}
\theoremstyle{plain}{\theorembodyfont{\rmfamily}
}
\theoremstyle{plain}{\theorembodyfont{\rmfamily}
}
\theoremstyle{plain}{\theorembodyfont{\rmfamily}
\newtheorem{example}[theorem]{Example}}
\theoremstyle{plain}{\theorembodyfont{\rmfamily}
\newtheorem{remark}[theorem]{Remark}}
\theoremstyle{plain}{\theorembodyfont{\rmfamily}
}
\begin{document}


\title{\textsc{New Demiclosedness Principles
for\\ (firmly) nonexpansive operators}}

\author{
Heinz H.\ Bauschke\thanks{Mathematics \& Statistics, 
University of British Columbia, 
Kelowna, B.C.\ V1V 1V7, Canada. E-mail:
\texttt{heinz.bauschke@ubc.ca}.}
}
 \vskip 3mm

\date{March 4, 2011\\~~\\~~\\
\emph{Dedicated to Jonathan Borwein on the occasion of his 60th
Birthday}} %
\maketitle

\begin{abstract} \noindent
The demiclosedness principle is one of the key tools
in nonlinear analysis and fixed point theory. 
In this note, this principle is extended and made more flexible 
by two mutually orthogonal affine subspaces. Versions
for finitely many (firmly) nonexpansive operators are presented.
As an application, a simple proof of the weak
convergence of the Douglas-Rachford splitting algorithm is provided.

\end{abstract}

\noindent {\bfseries 2010 Mathematics Subject Classification:}\\
Primary 47H05, 47H09; Secondary 47J25, 49M27, 
65J15, 65K05, 65K15, 90C25.

\noindent {\bfseries Keywords:}
Demiclosedness principle, 
Douglas-Rachford algorithm, 
firmly nonexpansive mapping,
maximal monotone operator,
nonexpansive mapping, 
proximal algorithm, 
resolvent,
splitting algorithm. 

\section{Introduction}

\noindent
Throughout this paper, we assume that
\begin{equation}
\text{
$X$ is a real Hilbert space with 
inner product $\scal{\cdot}{\cdot}$ and induced norm $\|\cdot\|$.}
\end{equation}
We shall assume basic notation and results from
Fixed Point Theory and from Monotone Operator Theory;
see, e.g., \cite{BC,BorVan,BurIus,GoeKir,GoeRei,RockWets,Si,Si2,Zalinescu}. 
The \emph{graph} of a maximally monotone operator $A\colon\HH\To\HH$ is
denoted by $\gr A$, its \emph{resolvent} $(A+\Id)^{-1}$ by $J_A$,
its set of zeros by $\zer A = A^{-1}(0)$, 
and we set $R_A = 2J_A-\Id$, where $\Id$ is the identity operator. 
Weak convergence is indicated by $\weakly$. 

Let $T\colon X\to X$.
Recall that $T$ is \emph{firmly nonexpansive} if
\begin{equation}
(\forall x\in X)(\forall y\in X)\quad
\|Tx-Ty\|^2 + \|(\Id-T)x-(\Id-T)y\|^2 \leq \|x-y\|^2.
\end{equation}
It is well know that $T$ is firmly nonexpansive if and only if
$R = 2T-\Id$ is \emph{nonexpansive}, i.e.,
\begin{equation}
(\forall x\in X)(\forall y\in X)\quad
\|Rx-Ry\|\leq \|x-y\|. 
\end{equation}
Clearly, every firmly nonexpansive operator is nonexpansive. 
Building on work by Minty \cite{Minty}, Eckstein and Bertsekas
\cite{EckBer}
clearly linked firmly nonexpansive mappings to maximally monotone operators---the
key result is the following:
$T$ is firmly nonexpansive if and only if $T=J_A$ for
some maximally monotone operator $A$ (namely, $T^{-1}-\Id$). 
Thus, finding a zero of $A$ is equivalent to finding a fixed point of
$J_A$. Furthermore, the graph of any maximally monotone operator
is beautifully described by the associated \emph{Minty parametrization}:
\begin{equation}
\label{e:Mintypar}
\gr A = \menge{(J_Ax,x-J_Ax)}{x\in X}.
\end{equation}
The most prominent example of firmly nonexpansive mappings are projectors, 
i.e., resolvents of normal cone operators associated with nonempty closed
convex subsets of $X$. Despite being (firmly) nonexpansive and hence
Lipschitz continuous, even projectors do not interact well with the weak
topology as was first observed by Zarantonello \cite{Zara}: 

\begin{example}
\label{ex:Zara}
Suppose that $X=\ell_2(\NN)$,
set $C = \menge{x\in X}{\|x\|\leq 1}$, 
and denote the sequence of standard unit vectors in $X$ by 
$(e_n)_{n\in\NN}$. 
Set $(\forall \nnn)$ $z_n = e_0+e_n$.
Then
\begin{equation}
z_n \weakly e_0
\quad\text{yet}\quad
P_Cz_n \weakly \tfrac{1}{\sqrt{2}}e_0\neq e_0 = P_Ce_0.
\end{equation}
\end{example}

The following classical demiclosedness principle dates back to the
1960s and work by Browder \cite{Browder68}. It comes somewhat as a surprise in view
of the previous example. 

\begin{fact}[Demiclosedness Principle]
Let $S$ be a nonempty closed convex subset of $\HH$,
let $T\colon S\to\HH$ be nonexpansive,
let $(z_n)_\nnn$ be a sequence in $S$ converging weakly to $z$,
and suppose that $z_n-Tz_n\to x$.
Then $z-Tz=x$.
\end{fact}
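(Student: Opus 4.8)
The plan is to convert the single nonexpansiveness inequality evaluated at the pair $(z_n,z)$ into an asymptotic statement, exploiting the fact that a cross term pairing a weakly null sequence against a strongly convergent one vanishes in the limit. First I would check that $Tz$ even makes sense: since $S$ is closed and convex it is weakly closed (Mazur), so the weak limit $z$ of the sequence $(z_n)_\nnn$ in $S$ again lies in $S$, and hence $Tz$ is defined. Writing $w_n := z_n - Tz_n$, the hypothesis reads $w_n \to x$ strongly, and consequently $Tz_n = z_n - w_n \weakly z - x$.

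Next I would apply nonexpansiveness to the pair $(z_n,z)$, namely $\|Tz_n - Tz\| \leq \|z_n - z\|$, and square it, the idea being to split $Tz_n - Tz$ into a weakly null part and a strongly convergent part. Setting $a_n := z_n - z \weakly 0$ and $b_n := (z - Tz) - w_n \to (z - Tz) - x =: c$, one has $Tz_n - Tz = a_n + b_n$, while $z_n - z = a_n$. Expanding $\|a_n + b_n\|^2 \leq \|a_n\|^2$ and cancelling $\|a_n\|^2$ leaves the clean inequality
\[
2\scal{a_n}{b_n} + \|b_n\|^2 \leq 0 \qquad (\forall\,\nnn).
\]

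The decisive step is to pass to the limit in this inequality. Here $\|b_n\|^2 \to \|c\|^2$, and the cross term satisfies $\scal{a_n}{b_n} \to 0$: writing $\scal{a_n}{b_n} = \scal{a_n}{c} + \scal{a_n}{b_n - c}$, the first summand tends to $0$ because $a_n \weakly 0$, and the second is bounded by $\|a_n\|\,\|b_n - c\| \to 0$ since weakly convergent sequences are bounded and $b_n \to c$ strongly. Thus the limit yields $\|c\|^2 \leq 0$, forcing $c = 0$, that is, $z - Tz = x$, as desired. I expect the only genuinely delicate point to be this last vanishing of the cross term: it is precisely the ``weak $\times$ strong'' interaction that makes the conclusion survive even though $T$ need not be weakly continuous (cf.\ Example~\ref{ex:Zara}), and boundedness of $(a_n)_\nnn$ together with the strong convergence $w_n \to x$ is what rescues it.
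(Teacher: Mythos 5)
Your proof is correct, but it takes a genuinely different route from the paper's. The paper deliberately derives this classical statement as a corollary of its new machinery: it reduces to $S=\HH$ by replacing $T$ with $T\circ P_S$, sets $y=z-x$, chooses the affine subspaces $C=\HH$ and $D=\{x/2\}$, and invokes its nonexpansiveness principle (Corollary~\ref{c:np}), whose proof ultimately rests on the Minty parametrization and Br\'ezis's weak-closure property of maximally monotone graphs. You instead give the classical direct Hilbert-space argument: square the nonexpansiveness inequality $\|Tz_n-Tz\|\leq\|z_n-z\|$, split $Tz_n-Tz=a_n+b_n$ with $a_n=z_n-z\weakly 0$ and $b_n\to c=(z-Tz)-x$ strongly, cancel $\|a_n\|^2$, and kill the cross term $\scal{a_n}{b_n}$ by the weak-times-strong argument to get $\|c\|^2\leq 0$. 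All steps check out, including the preliminary observation that $z\in S$ by weak closedness of $S$ (which replaces the paper's $T\circ P_S$ trick). What each approach buys: yours is shorter, elementary, and entirely self-contained, which is why it is the standard textbook proof; the paper's is intentionally roundabout, since its purpose in that section is to demonstrate that the classical principle is strictly subsumed by the new two-subspace demiclosedness framework rather than to give the most economical proof.
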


\begin{remark}
One might inquire whether or not the following even 
less restrictive demiclosedness principle holds:
\begin{equation}
\left. \begin{matrix}z_n \weakly {z}\\
z_n-Tz_n\weakly x
\end{matrix}
\right\}
\;\;\stackrel{?}{\Rightarrow}\;\;
z-Tz=x. 
\end{equation}
However, this is generalization is false in general:
indeed, suppose that $X$, $C$, and $(x_n)_\nnn$
are as in Example~\ref{ex:Zara},
and set 
$T=\Id-P_C$, which is (even firmly) nonexpansive.
Then $x_n\weakly e_0$ and $x_n-Tx_n=P_Cx_n\weakly
\tfrac{1}{\sqrt{2}}e_0$ yet
$e_0-Te_0 = P_Ce_0 = e_0 \neq \tfrac{1}{\sqrt{2}}e_0$.
\end{remark}

The aim of this note is 
to provide new versions of the demiclosedness principle
and illustrate their usefulness. 
The remainder of this paper is organized as follows.
Section~\ref{s:2} presents new demiclosedness principles
for one (firmly) nonexpansive operator.
Multi-operator versions are provided in Section~\ref{s:3}.
The weak convergence of the Douglas-Rachford algorithm
is rederived with a very transparent proof in Section~\ref{s:4}.

\section{Demiclosedness Principles}

\label{s:2}

\begin{fact}[Br\'ezis]
\label{f:Brezis}
{(\rm See \cite[Proposition~2.5 on page~27]{Brezis},
\cite[Lemma~4]{Svai10}, or \cite[Corollary~20.49]{BC}.)}
Let $A\colon X\To X$ be maximally monotone,
let $(x,u)\in X\times X$, and
let $(x_n,u_n)_\nnn$ be a sequence in $X\times X$
such that 
$(x_n,u_n)\weakly (x,u)$
and $\varlimsup \scal{x_n}{u_n} \leq \scal{x}{u}$.
Then $\scal{x_n}{u_n}\to\scal{x}{u}$ and 
$(x,u)\in \gr A$. 
\end{fact}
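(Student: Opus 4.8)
The plan is to exploit the order-theoretic characterization of maximal monotonicity directly: a pair $(x,u)$ lies in $\gr A$ precisely when $\scal{x-y}{u-v}\geq 0$ for \emph{every} $(y,v)\in\gr A$. So the natural strategy is to test the weak limit point $(x,u)$ against an arbitrary graph point $(y,v)$ and verify that the monotonicity inequality survives the passage to the limit. (I read the hypotheses as placing each $(x_n,u_n)$ in $\gr A$, which is what is needed for the conclusion $(x,u)\in\gr A$ to have any chance of holding.)

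First I would fix an arbitrary $(y,v)\in\gr A$ and write down monotonicity along the sequence, $\scal{x_n-y}{u_n-v}\geq 0$, expanded as
\begin{equation}
\scal{x_n}{u_n}-\scal{x_n}{v}-\scal{y}{u_n}+\scal{y}{v}\geq 0.
\end{equation}
Every term except $\scal{x_n}{u_n}$ is an inner product of a weakly convergent sequence against a \emph{fixed} vector, hence converges: $\scal{x_n}{v}\to\scal{x}{v}$ and $\scal{y}{u_n}\to\scal{y}{u}$. Taking $\varlimsup$ and invoking the standing assumption $\varlimsup\scal{x_n}{u_n}\leq\scal{x}{u}$ collapses the inequality to $\scal{x}{u}-\scal{x}{v}-\scal{y}{u}+\scal{y}{v}\geq 0$, i.e.\ $\scal{x-y}{u-v}\geq 0$. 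Since $(y,v)\in\gr A$ was arbitrary, maximal monotonicity delivers $(x,u)\in\gr A$.

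For the convergence of the inner products I would feed the freshly obtained membership $(x,u)\in\gr A$ back into monotonicity by choosing $(y,v)=(x,u)$, giving $\scal{x_n-x}{u_n-u}\geq 0$. Expanding and again using weak convergence of the cross terms yields the reverse estimate $\varliminf\scal{x_n}{u_n}\geq\scal{x}{u}$, which combined with the hypothesized $\varlimsup\scal{x_n}{u_n}\leq\scal{x}{u}$ squeezes the sequence to its limit, $\scal{x_n}{u_n}\to\scal{x}{u}$.

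The argument is short, and its only genuine subtlety is the interplay of $\varlimsup$ with the monotonicity inequality: one must work with the limit superior (an ordinary limit need not exist a priori) and distribute it correctly across a sum in which a single term is merely bounded above while the remaining terms genuinely converge. The main ``obstacle,'' such as it is, is recognizing that the membership conclusion must be secured \emph{before} the convergence conclusion, since the latter relies on being entitled to test against $(x,u)$ itself.
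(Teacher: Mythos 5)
Your proof is correct and complete; it is the standard argument (test against an arbitrary $(y,v)\in\gr A$ and pass to the limit superior to get $(x,u)\in\gr A$ by maximality, then test against $(x,u)$ itself to get $\varliminf\scal{x_n}{u_n}\geq\scal{x}{u}$), and it is essentially the proof in the cited sources --- the paper states this result as a Fact with references and gives no proof of its own, so there is nothing to diverge from. You were also right to read the hypothesis as requiring $(x_n,u_n)\in\gr A$: as printed (``a sequence in $X\times X$'') the statement is false, and the Fact is indeed only applied in Theorem~\ref{t:book} to sequences lying in $\gr A$.
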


\begin{theorem}
\label{t:book}
{(\rm See also \cite[Proposition~20.50]{BC}.)}
Let $A\colon X\To X$ be maximally monotone,
let $(x,u)\in X\times X$, and
let $C$ and $D$ be closed affine subspaces of $X$
such that $D-D=(C-C)^\bot$.
Furthermore,
let $(x_n,u_n)_\nnn$ be a sequence in $\gr A$
such that 
\begin{equation}
(x_n,u_n)\weakly (x,u)
\quad\text{and}\quad
(x_n,u_n)-P_{C\times D}(x_n,u_n)\to (0,0).
\end{equation}
Then $(x,u)\in (C\times D)\cap \gr A$
and $\scal{x_n}{u_n}\to \scal{x}{u}$. 
\end{theorem}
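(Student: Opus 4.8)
The plan is to funnel everything into the Br\'ezis characterization of Fact~\ref{f:Brezis}: the moment we know that $\scal{x_n}{u_n}\to\scal{x}{u}$, that fact simultaneously yields $(x,u)\in\gr A$ and the asserted convergence of the inner products. So the real work is to prove the two membership statements $x\in C$ and $u\in D$, and to establish the inner-product convergence by a direct computation. First I would record the product structure of the projector. Since $C$ and $D$ are closed affine subspaces, $P_{C\times D}=P_C\times P_D$, so the hypothesis splits into $x_n-P_Cx_n\to 0$ and $u_n-P_Du_n\to 0$. From $x_n-P_Cx_n\to 0$ and $x_n\weakly x$ we get $P_Cx_n\weakly x$; as $P_Cx_n$ lies in the weakly closed set $C$, this forces $x\in C$, and symmetrically $u\in D$, so $(x,u)\in C\times D$.

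Next I would set up the orthogonal decomposition. Put $V:=C-C$ and $W:=D-D$, so the standing hypothesis reads $W=V^\bot$, i.e.\ $V$ and $W$ are mutually orthogonal closed subspaces with $W^\bot=V$. Using the now-available base points $x\in C$ and $u\in D$, the affine projection formulas give $x_n-P_Cx_n=P_W(x_n-x)$ and $u_n-P_Du_n=P_V(u_n-u)$. I then decompose $x_n-x=s_n+t_n$ with $s_n=P_V(x_n-x)\in V$ and $t_n=P_W(x_n-x)\in W$, and $u_n-u=p_n+q_n$ with $p_n=P_V(u_n-u)\in V$ and $q_n=P_W(u_n-u)\in W$. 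The two hypotheses say exactly that $t_n\to 0$ and $p_n\to 0$ strongly, while weak convergence together with boundedness of the (norm-one) projectors $P_V,P_W$ gives $s_n,t_n,p_n,q_n\weakly 0$ and, in particular, boundedness of all four sequences.

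The key step is the inner-product expansion, which reads
\[
\scal{x_n}{u_n}=\scal{x}{u}+\scal{x}{u_n-u}+\scal{x_n-x}{u}
+\scal{s_n}{p_n}+\scal{s_n}{q_n}+\scal{t_n}{p_n}+\scal{t_n}{q_n}.
\]
I would argue that every correction term vanishes in the limit: the two linear terms die because $x_n\weakly x$ and $u_n\weakly u$; the terms $\scal{s_n}{p_n}$, $\scal{t_n}{p_n}$, and $\scal{t_n}{q_n}$ die because in each a factor ($p_n$ or $t_n$) converges strongly to $0$ while the other stays bounded; and the remaining term $\scal{s_n}{q_n}$ vanishes identically because $s_n\in V$, $q_n\in W$, and $V\bot W$. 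Hence $\scal{x_n}{u_n}\to\scal{x}{u}$.

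I expect this last point to be the crux. The term $\scal{s_n}{q_n}$ is the only genuinely dangerous one: both of its factors converge only weakly, so no strong-convergence hypothesis can control it, and the product of two weakly null sequences need not be null. The orthogonality assumption $W=V^\bot$ is precisely what annihilates it. With the inner-product limit established we have $\varlimsup\scal{x_n}{u_n}=\scal{x}{u}\le\scal{x}{u}$, so Fact~\ref{f:Brezis} applies and delivers $(x,u)\in\gr A$; combined with $(x,u)\in C\times D$ from the first paragraph, this completes the proof.
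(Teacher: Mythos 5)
Your proof is correct and follows essentially the same route as the paper: establish $x\in C$ and $u\in D$ from the strong convergence of the residuals, decompose along $V=C-C$ and $V^\bot$, kill the one dangerous weak--weak term by orthogonality and the others by strong convergence of $t_n$ and $p_n$, and finish with Fact~\ref{f:Brezis}. The only cosmetic difference is that you center the expansion at the limit $(x,u)$ and use boundedness of the projected sequences, whereas the paper decomposes $x_n$ and $u_n$ directly and invokes weak continuity of $P_V$ and $P_{V^\bot}$; both arguments hinge on exactly the same cancellation.
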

\begin{proof}
Set $V=C-C$, which is a closed linear subspace. 
Since $x_n-P_Cx_n\to 0$, we have $P_Cx_n\weakly x$ and
thus $x\in C$. Likewise, $u\in D$ and hence
\begin{equation}
C = x+V
\quad\text{and}\quad
D = u + V^\bot.
\end{equation}
It follows that
\begin{equation}
P_C\colon z \mapsto P_Vz + P_{V^\bot}x
\quad\text{and}\quad
P_D\colon z \mapsto P_{V^\bot}z + P_Vu.
\end{equation}
Therefore, since $P_V$ and $P_{V^\bot}$ are weakly continuous, 
\begin{subequations}
\begin{align}
\scal{x_n}{u_n} 
&= \scal{P_Vx_n + P_{V^\bot}x_n}{P_Vu_n+P_{V^\bot}u_n}\\
&= \scal{P_Vx_n}{P_Vu_n}
+\scal{P_{V^\bot}x_n}{P_{V^\bot}u_n}\\
&= \scal{P_Vx_n}{u_n-P_{V^\bot}u_n}
+\scal{x_n-P_{V}x_n}{P_{V^\bot}u_n}\\
&= \scal{P_Vx_n}{u_n-(P_Du_n-P_{V}u)}\\
&\qquad +\scal{x_n-(P_{C}x_n-P_{V^\bot}x)}{P_{V^\bot}u_n}\\
&= \scal{P_Vx_n}{u_n-P_Du_n} + \scal{P_Vx_n}{P_Vu}\\
&\qquad + \scal{x_n-P_Cx_n}{P_{V^\bot}u_n} 
+ \scal{P_{V^{\bot}}x}{P_{V^\bot}{u_n}}\\
&\to \scal{P_Vx}{P_Vu} + \scal{P_{V^{\bot}}x}{P_{V^\bot}{u}}\\
&= \scal{x}{u}.
\end{align}
\end{subequations}
The result now follows from Fact~\ref{f:Brezis}. 
\end{proof}

\begin{remark}
Theorem~\ref{t:book}
generalizes \cite[Theorem~2]{Bau09},
which corresponds to the case $C$ is a closed
linear subspace and $D=C^\bot$ and which
was obtained by a different proof technique.
\end{remark}

\begin{corollary}[firm nonexpansiveness principle]
\label{c:fnp}
Let $F\colon\HH\to\HH$ be firmly nonexpansive,
let $(z_n)_\nnn$ be a sequence in $\HH$ such that 
$(z_n)_\nnn$ converges weakly to $z\in X$, 
suppose that $Fz_n\weakly x\in X$,
and that $C$ and $D$ are closed affine subspaces of $X$
such that $D-D=(C-C)^\bot$, 
$Fz_n-P_CFz_n\to 0$,
$(z_n-Fz_n)-P_D(z_n-Fz_n)\to 0$.
Then 
$x\in C$, $z\in x+ D$, and $x = Fz$.
\end{corollary}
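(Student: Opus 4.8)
The plan is to reduce the statement to Theorem~\ref{t:book} by passing from the firmly nonexpansive operator $F$ to its associated maximally monotone operator. As recalled in the introduction, $F$ is firmly nonexpansive if and only if $F=J_A$ for the maximally monotone operator $A=F^{-1}-\Id$. Consequently, the Minty parametrization~\eqref{e:Mintypar} shows that every pair of the form $(Fy,y-Fy)$ lies in $\gr A$; in particular, setting $x_n=Fz_n$ and $u_n=z_n-Fz_n$, the sequence $(x_n,u_n)_\nnn$ lies in $\gr A$.

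Next I would identify the candidate weak limit of $(x_n,u_n)_\nnn$. By hypothesis $x_n=Fz_n\weakly x$, and since $z_n\weakly z$ we obtain $u_n=z_n-Fz_n\weakly z-x$. Writing $u:=z-x$, this reads $(x_n,u_n)\weakly (x,u)$. The two remaining hypotheses translate verbatim: $Fz_n-P_CFz_n\to 0$ says $x_n-P_Cx_n\to 0$, while $(z_n-Fz_n)-P_D(z_n-Fz_n)\to 0$ says $u_n-P_Du_n\to 0$; since the projector onto a product decomposes as $P_{C\times D}(a,b)=(P_Ca,P_Db)$, these combine to $(x_n,u_n)-P_{C\times D}(x_n,u_n)\to (0,0)$. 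The compatibility condition $D-D=(C-C)^\bot$ is assumed, so every hypothesis of Theorem~\ref{t:book} is now in force.

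Applying Theorem~\ref{t:book} to $A$, the limit $(x,u)$, the affine subspaces $C$ and $D$, and the sequence $(x_n,u_n)_\nnn$ then yields $(x,u)\in (C\times D)\cap \gr A$ (the convergence $\scal{x_n}{u_n}\to\scal{x}{u}$ is a bonus not needed for the stated conclusion). Finally I would unwind this. From $(x,u)\in C\times D$ I read off $x\in C$ and $u=z-x\in D$, so that $z=x+u\in x+D$. From $(x,u)\in \gr A$, the Minty parametrization~\eqref{e:Mintypar} supplies some $y\in X$ with $x=Fy$ and $u=y-Fy=y-x$; hence $y=u+x=z$ and therefore $x=Fz$, which is exactly the desired conclusion.

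Since the argument is essentially a dictionary translating Theorem~\ref{t:book} into the language of firmly nonexpansive operators, there is no serious obstacle. The only points requiring genuine care are the bookkeeping that converts $F$ into $A=F^{-1}-\Id$ and confirms via~\eqref{e:Mintypar} that $(Fz_n,z_n-Fz_n)$ really lies on $\gr A$, together with the correct identification of $u=z-x$ as the weak limit of the second coordinate and the final back-substitution $y=z$ that produces $x=Fz$.
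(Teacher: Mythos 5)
Your proposal is correct and follows essentially the same route as the paper: pass to $A=F^{-1}-\Id$, note via the Minty parametrization that $(Fz_n,z_n-Fz_n)$ lies in $\gr A$ and converges weakly to $(x,z-x)$, apply Theorem~\ref{t:book}, and unwind. The only cosmetic difference is in the last step, where the paper reads $z-x\in Ax$ directly and concludes $x=J_Az$, while you re-invoke the Minty parametrization to produce $y=z$; the two are equivalent.
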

\begin{proof}
Set $A = F^{-1}-\Id$ so that $J_A=F$. 
By \eqref{e:Mintypar}, $A$ is maximally monotone and 
\begin{equation}
(x_n,u_n)_\nnn := (Fz_n,z_n-Fz_n)_\nnn
\end{equation}
is a sequence in $\gr A$ that converges weakly
to $(x,z-x)$. 
Thus, by Theorem~\ref{t:book},
$x\in C$, $z-x\in D$,
and $z-x\in Ax$. 
Therefore, $z\in x+Ax$, i.e., 
$x=J_Az = Fz$. 
\end{proof}

\begin{corollary}[nonexpansiveness principle]
\label{c:np}
Let $T\colon X\to X$ be nonexpansive,
let $(z_n)_\nnn$ be a sequence in $X$ such that $z_n\weakly z$,
suppose that $Tz_n\weakly y$,
and that 
$C$ and $D$ are closed affine subspaces of $X$
such that $D-D=(C-C)^\bot$, 
$z_n + Tz_n - P_Cz_n -P_CTz_n \to 0$,
and 
$z_n-Tz_n - P_Dz_n - P_D(-Tz_n)\to 0$. 
Then 
$\thalb z + \thalb y\in C$,
$\thalb z - \thalb y \in D$,
and $y=Tz$. 
\end{corollary}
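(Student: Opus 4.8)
The plan is to reduce the statement to the firm nonexpansiveness principle (Corollary~\ref{c:fnp}) via the standard averaging device. Since $T$ is nonexpansive, the operator $F := \thalb(\Id + T)$ satisfies $2F-\Id = T$, so $F$ is firmly nonexpansive by the equivalence recalled in the introduction. My strategy is to feed $F$ into Corollary~\ref{c:fnp}, verify its hypotheses, and then unwind the three resulting conclusions back into the asserted statements about $T$.

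First I would record the weak limits associated with $F$. From $z_n\weakly z$ and $Tz_n\weakly y$ one gets $Fz_n = \thalb(z_n+Tz_n)\weakly \thalb(z+y) =: x$ and $z_n-Fz_n = \thalb(z_n-Tz_n)\weakly \thalb(z-y)$, using only that weak convergence is preserved under linear combinations. Thus in the notation of Corollary~\ref{c:fnp} the weak limit of $Fz_n$ is $x = \thalb(z+y)$.

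Next I would check the two residual conditions required by Corollary~\ref{c:fnp}, namely $Fz_n-P_CFz_n\to 0$ and $(z_n-Fz_n)-P_D(z_n-Fz_n)\to 0$. The key point is that, because $C$ and $D$ are \emph{affine} (not merely linear) subspaces, $P_C$ and $P_D$ preserve affine combinations, i.e.\ those with coefficients summing to $1$. Writing $Fz_n = \thalb z_n + \thalb Tz_n$ and applying $P_C$ termwise gives $Fz_n-P_CFz_n = \thalb(z_n+Tz_n-P_Cz_n-P_CTz_n)$, which tends to $0$ by the first hypothesis. Similarly, writing $z_n-Fz_n = \thalb z_n + \thalb(-Tz_n)$ and applying $P_D$ termwise gives $(z_n-Fz_n)-P_D(z_n-Fz_n) = \thalb(z_n-Tz_n-P_Dz_n-P_D(-Tz_n))$, which tends to $0$ by the second hypothesis. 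This is the step that most repays care: one must invoke the affine-combination identity for $P_C$ and $P_D$ rather than linearity, and the two slightly awkward-looking hypotheses are designed precisely so that these residuals collapse to exactly the prescribed quantities.

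Finally I would invoke Corollary~\ref{c:fnp} to obtain $x\in C$, $z\in x+D$, and $x=Fz$, and then translate. The inclusion $x\in C$ is exactly $\thalb z + \thalb y\in C$; the inclusion $z-x\in D$ reads $\thalb z - \thalb y\in D$; and $x = Fz = \thalb(z+Tz)$ together with $x=\thalb(z+y)$ forces $y=Tz$. I do not expect a genuine obstacle here, since the argument is a clean reduction to the firmly nonexpansive case; the only thing to watch is the bookkeeping in the affine-projection identities of the middle step.
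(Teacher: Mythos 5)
Your proposal is correct and follows essentially the same route as the paper: define $F=\thalb\Id+\thalb T$, use the affineness of $P_C$ and $P_D$ to rewrite the two residual hypotheses as $Fz_n-P_CFz_n\to 0$ and $(z_n-Fz_n)-P_D(z_n-Fz_n)\to 0$, invoke Corollary~\ref{c:fnp}, and translate back. The paper's proof is the same computation written out as chains of equivalences.
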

\begin{proof}
Set $F = \tfrac{1}{2}\Id + \tfrac{1}{2}T$,
which is firmly nonexpansive. 
Then $Fz_n \weakly \tfrac{1}{2}z + \frac{1}{2}y =: x$.
Since $P_C$ is affine, we get
\begin{subequations}
\begin{align}
&z_n + Tz_n - P_Cz_n -P_CTz_n \to 0\\
& \Leftrightarrow 
 z_n + Tz_n - 2\big(\tfrac{1}{2}P_Cz_n + \tfrac{1}{2}P_CTz_n\big) \to 0\\
& \Leftrightarrow 
 z_n + Tz_n - 2P_C\big(\tfrac{1}{2}z_n + \tfrac{1}{2}Tz_n\big) \to 0\\
& \Leftrightarrow  2Fz_n - 2P_CFz_n \to 0\\
&\Leftrightarrow Fz_n-P_CFz_n \to 0. 
\end{align}
\end{subequations}
Likewise, since
$z_n-Fz_n = z_n -\tfrac{1}{2}z_n-\tfrac{1}{2}Tz_n
= \tfrac{1}{2}z_n - \tfrac{1}{2}Tz_n$, we have 
\begin{subequations}
\begin{align}
&z_n-Tz_n - P_Dz_n - P_D(-Tz_n)\to 0\\
&\Leftrightarrow 
z_n-Tz_n - 2\big(\tfrac{1}{2}P_Dz_n + \tfrac{1}{2}P_D(-Tz_n)\big)\to 0\\
&\Leftrightarrow 
2(z_n-Fz_n) - 2P_D\big(\tfrac{1}{2}z_n + \tfrac{1}{2}(-Tz_n)\big)\to 0\\
&\Leftrightarrow z_n-Fz_n -P_D(z_n-Fz_n)\to 0. 
\end{align}
\end{subequations}
Thus, by Corollary~\ref{c:fnp}, 
$x\in C$, 
$z\in x+D$, and
$x=Fz$, i.e.,
$\thalb z + \thalb y\in C$,
$z\in \thalb z+ \thalb y + D$,
and 
$\thalb z + \thalb y = Fz = \thalb z + \thalb Tz$, i.e., 
$\thalb z + \thalb y\in C$,
$\thalb z - \thalb y \in D$,
and $y=Tz$. 
\end{proof}

\begin{corollary}[classical demiclosedness principle]
Let $S$ be a nonempty closed convex subset of $\HH$,
let $T\colon S\to\HH$ be nonexpansive,
let $(z_n)_\nnn$ be a sequence in $S$ converging weakly to $z$,
and suppose that $z_n-Tz_n\to x$.
Then $z-Tz=x$.
\end{corollary}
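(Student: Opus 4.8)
The plan is to obtain this classical principle as a special case of the nonexpansiveness principle (Corollary~\ref{c:np}) by choosing the two affine subspaces as extreme as possible. First I would record the two preliminary facts that cost nothing: since $S$ is closed and convex it is weakly closed, so $z_n\weakly z$ forces $z\in S$ and hence $Tz$ is defined; and since $Tz_n = z_n-(z_n-Tz_n)$ is the difference of a weakly convergent and a strongly (hence weakly) convergent sequence, we automatically have $Tz_n\weakly z-x=:y$. One technical point must be dealt with before Corollary~\ref{c:np} applies, namely that it is stated for an operator defined on all of \HH. I would remove this obstruction by invoking the Kirszbraun--Valentine extension theorem to extend $T$ to a nonexpansive operator $\widetilde T\colon\HH\to\HH$; since $z_n\in S$ and $z\in S$, we have $\widetilde Tz_n=Tz_n$ and $\widetilde Tz=Tz$, so the extension changes none of the relevant values.

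Next I would make the key choice $C=\HH$ and $D=\{\thalb x\}$ (a single point, which is a legitimate closed affine subspace). These satisfy the orthogonality hypothesis trivially, because $D-D=\{0\}=\HH^\bot=(C-C)^\bot$. With $C=\HH$ we have $P_C=\Id$, so the first convergence condition of Corollary~\ref{c:np}, namely $z_n+\widetilde Tz_n-P_Cz_n-P_C\widetilde Tz_n\to 0$, holds because its left-hand side is identically zero. With $D=\{\thalb x\}$ the projection $P_D$ is the constant map $z\mapsto\thalb x$, so the second condition reduces to $z_n-\widetilde Tz_n-P_Dz_n-P_D(-\widetilde Tz_n)=(z_n-Tz_n)-x$, which tends to $0$ precisely by the hypothesis $z_n-Tz_n\to x$. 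Thus the single strong-convergence assumption of the classical statement is exactly what powers the second hypothesis once $D$ is centered at $\thalb x$.

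Finally, Corollary~\ref{c:np} yields $y=\widetilde Tz=Tz$; since $y=z-x$, this reads $z-Tz=x$, which is the claim. The only step that is not a mechanical verification is the extension of $T$ from $S$ to \HH; everything else reduces to the facts that projecting onto the whole space is the identity and that projecting onto a point is constant. I do not expect the orthogonality bookkeeping to cause trouble, since the degenerate choices $C=\HH$ and $D=\{\thalb x\}$ make $D-D=(C-C)^\bot$ immediate.
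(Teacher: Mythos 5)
Your proof is correct and takes essentially the same route as the paper's: the identical choices $C=\HH$ and $D=\{\thalb x\}$, the same verification that the two convergence hypotheses of Corollary~\ref{c:np} degenerate to $0\to 0$ and $z_n-Tz_n-x\to 0$, and the same conclusion $y=Tz$. The only (cosmetic) difference is the extension step: the paper replaces $T$ by $T\circ P_S$, which is nonexpansive on all of $\HH$ and agrees with $T$ on $S$, whereas you invoke the Kirszbraun--Valentine theorem --- both are valid, the paper's device merely being more elementary.
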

\begin{proof}
We may and do assume that $S=\HH$ (otherwise,
consider $T\circ P_S$ instead of $T$). 
Set $y=z-x$ and note that $Tz_n\weakly y$.
Now set $C=\HH$ and $D = \{x/2\}$. 
Then $D-D = \{0\} = X^\bot = (X-X)^\bot = (D-D)^\bot$,
$z_n+Tz_n-P_Cz_n-P_CTz_n\equiv 0$
and 
$z_n-Tz_n-P_Dz_n-P_D(-Tz_n)
= z_n-Tz_n-x/2-x/2 \to 0$. 
Corollary~\ref{c:np} implies
$y=Tz$, i.e., $z-x=Tz$. 
\end{proof}

\section{Multi-Operator Demiclosedness Principles}

\label{s:3}

Set
\begin{equation}
I = \{1,2,\ldots,m\},
\quad\text{where $m$ is an integer greater than or equal to $2$.}
\end{equation}
We shall work
in the product Hilbert space
\begin{equation}
\bX = X^I
\end{equation}
with induced inner product
$\scal{\bx}{\by} = \sum_{i\in I} \scal{x_i}{y_i}$
and $\|\bx\| = \sqrt{\sum_{i\in I} \|x_i\|^2}$,
where $\bx = (x_i)_{i\in I}$ and $\by = (y_i)_{i\in I}$
denote generic elements in $\bX$.

\begin{theorem}[Multi-Operator Demiclosedness Principle for Firmly Nonexpansive
Operators]~\\ 
\label{t:firm}
Let $(F_i)_{i\in I}$ be a family of firmly nonexpansive operators on $X$,
and let, for each $i\in I$, $(z_{i,n})_\nnn$ be a sequence in $\HH$ such that
for all $i$ and $j$ in $I$, 
\begin{align}
\label{e:kati:1} z_{i,n}\weakly
{z}_i\;\;\text{and}\;\;F_iz_{i,n}\weakly{x}\\
\label{e:kati:2} \sum_{i\in I}(z_{i,n}-F_iz_{i,n})\to
-m{x}+\sum_{i\in I}{z_i}\\
\label{e:kati:3} F_{i}z_{i,n}-F_jz_{j,n}\to 0.
\end{align}
Then $F_i{z}_i = {x}$, for every $i\in I$. 
\end{theorem}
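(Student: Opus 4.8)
The plan is to lift everything to the product space $\bX = X^I$ and apply the single-operator firm nonexpansiveness principle (Corollary~\ref{c:fnp}) there. First I would introduce the product operator $\bF\colon\bX\to\bX$, $\bF\bx=(F_ix_i)_{i\in I}$, which inherits firm nonexpansiveness by summing the defining inequalities over the coordinates. Writing $\bz_n=(z_{i,n})_{i\in I}$, $\bz=(z_i)_{i\in I}$, and letting $\bx=(x,\dots,x)$ be the constant vector all of whose $m$ entries equal the common weak limit $x$, the first hypothesis~\eqref{e:kati:1} gives precisely $\bz_n\weakly\bz$ and $\bF\bz_n\weakly\bx$.

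Next I would choose the two mutually orthogonal affine subspaces. Let $\bC$ be the diagonal subspace $\{(y,\dots,y):y\in X\}$, a closed linear subspace, so that $P_{\bC}$ replaces each coordinate of a vector by the average of all its coordinates, $\bC-\bC=\bC$, and $(\bC-\bC)^\bot=\{(u_i)_{i\in I}\in\bX:\sum_{i\in I}u_i=0\}$. The key observation is that the coordinate-difference hypothesis~\eqref{e:kati:3} says exactly that $\bF\bz_n$ collapses onto the diagonal: the $i$-th coordinate of $\bF\bz_n-P_{\bC}\bF\bz_n$ is $\tfrac1m\sum_{j\in I}(F_iz_{i,n}-F_jz_{j,n})\to 0$, whence $\bF\bz_n-P_{\bC}\bF\bz_n\to 0$. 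For the second subspace I would take $\bD=\mathbf{w}+(\bC-\bC)^\bot$, where $\mathbf{w}$ is the diagonal vector each of whose entries equals $-x+\tfrac1m\sum_{i\in I}z_i$; then $\bD-\bD=(\bC-\bC)^\bot$ holds by construction. Since $\mathbf{w}\in\bC$, the projection decomposes as $P_{\bD}=P_{(\bC-\bC)^\bot}(\cdot)+\mathbf{w}$, so the residual $(\bz_n-\bF\bz_n)-P_{\bD}(\bz_n-\bF\bz_n)$ equals $P_{\bC}(\bz_n-\bF\bz_n)-\mathbf{w}$, a diagonal vector whose common entry $\tfrac1m\sum_{i\in I}(z_{i,n}-F_iz_{i,n})-\big(-x+\tfrac1m\sum_{i\in I}z_i\big)$ tends to $0$ by the sum-hypothesis~\eqref{e:kati:2}.

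Finally, having verified both projection conditions together with $\bD-\bD=(\bC-\bC)^\bot$, I would invoke Corollary~\ref{c:fnp} with the operator $\bF$, the sequence $\bz_n$, and the subspaces $\bC,\bD$; its conclusion $\bx=\bF\bz$ reads $(x,\dots,x)=(F_1z_1,\dots,F_mz_m)$, i.e.\ $F_iz_i=x$ for every $i\in I$, as desired. I expect the main obstacle to be guessing the correct pair $(\bC,\bD)$ rather than any estimate: the real content is the dictionary whereby the many coordinate-difference conditions~\eqref{e:kati:3} are precisely the single \emph{diagonal} condition feeding $\bC$, while the one sum condition~\eqref{e:kati:2} is precisely the \emph{orthogonal-complement} condition feeding $\bD$. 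Once the subspaces are identified, the two convergence checks are routine coordinate computations.
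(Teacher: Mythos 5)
Your proposal is correct and follows essentially the same route as the paper: lift to $\bX=X^I$, take $\bC$ to be the diagonal, take $\bD$ to be a translate of $\bC^\perp$, verify the two residual conditions from \eqref{e:kati:3} and \eqref{e:kati:2}, and invoke Corollary~\ref{c:fnp}. The only cosmetic difference is your base point for $\bD$: you use the diagonal vector $\mathbf{w}=P_{\bC}(\bz-\bx)$ whereas the paper uses $\bz-\bx$ itself, but these define the same affine subspace and lead to the same computation.
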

\begin{proof}
Set 
$\bx = (x)_{i\in I}$,
$\bz = (z_i)_{i\in I}$,
$(\bz_n)= (z_{i,n})_\nnn$,
and 
$\bC = \menge{(y)_{i\in I}}{y\in X}$.
Then $\bz_n\weakly \bz$ and $\bC$ is a closed subspace of $\bX$
with $\bC^\perp = \menge{(y_i)_{i\in I}}{\sum_{i\in I}y_i=0}$.
Furthermore, we set $\bD = \bz-\bx+\bC^\perp$ so that
$(\bC-\bC)^\perp = \bC^\perp = \bD-\bD$,
and also $\bF\colon (y_i)_{i\in I}\mapsto (Fy_i)_{i\in I}$. 
Then $\bF$ is firmly nonexpansive on $\bX$,
and $\bF\bz_n\weakly \bx$. 
Now \eqref{e:kati:3} implies 
\begin{equation}
(\forall i\in I)\quad 
F_iz_{i,n} - \frac{1}{m}\sum_{j\in I} F_jz_{j,n}\to 0,
\end{equation}
which---when viewed in $\bX$---means that 
$\bF\bz_n - P_{\bC}\bF\bz_n\to 0$. 
Similarly, using \eqref{e:kati:2}, 
\begin{subequations}
\begin{align}
\bz_n - \bF\bz_n - P_{\bD}(\bz_n-\bF\bz_n) &=
\bz_n - \bF\bz_n - P_{\bz-\bx+\bC^\perp}(\bz_n-\bF\bz_n)\\
&= \bz_n-\bF\bz_n -\big( \bz-\bx +
P_{\bC^\perp}\big(\bz_n-\bF\bz_n-(\bz-\bx)\big)\big)\\
&=\big(\Id-P_{\bC^\perp}\big)(\bz_n-\bF\bz_n) -
\big(\Id-P_{\bC^\perp}\big)(\bz-\bx)\\
&= P_{\bC}(\bz_n-\bF\bz_n) - P_{\bC}(\bz-\bx)\\
&= \Big(\tfrac{1}{m}\sum_{i\in I}\big( z_{i,n} - F_iz_{i,n}-z_i+x\big)
\Big)_{j\in I}\\
&\to 0. 
\end{align}
\end{subequations}
Therefore, by Corollary~\ref{c:fnp}, $\bx = \bF\bz$. 
\end{proof}

\begin{theorem}[Multi-Operator Demiclosedness Principle for Nonexpansive
Operators]~\\
\label{t:nonexp}
Let $(T_i)_{i\in I}$ be a family of nonexpansive operators on $\HH$,
and let, for each $i\in I$, $(x_{i,n})_\nnn$ be a sequence in $\HH$ such that
for all $i$ and $j$ in $I$, 
\begin{align}
\label{e:stef:1} z_{i,n}\weakly
{z}_i\;\;\text{and}\;\;T_iz_{i,n}\weakly{y}_i\\
\label{e:stef:2} 
\sum_{i\in I}\big(z_{i,n}-T_iz_{i,n}\big)\to
\sum_{i\in I}\big({z_i}-{y}_i\big)\\
\label{e:stef:3} z_{i,n}-z_{j,n}+T_{i}z_{i,n}-T_jz_{j,n}\to 0.
\end{align}
Then $T_i{z}_i={y}_i$, 
for each $i\in I$. 
\end{theorem}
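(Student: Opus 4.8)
The plan is to reduce the nonexpansive statement to the firmly nonexpansive multi-operator principle of Theorem~\ref{t:firm}, exactly as Corollary~\ref{c:np} was reduced to Corollary~\ref{c:fnp} in the single-operator setting. For each $i\in I$, set $F_i = \thalb\Id + \thalb T_i$; then each $F_i$ is firmly nonexpansive (since $2F_i-\Id=T_i$ is nonexpansive), one has $z_{i,n}-F_iz_{i,n}=\thalb(z_{i,n}-T_iz_{i,n})$, and \eqref{e:stef:1} gives $F_iz_{i,n}\weakly \thalb(z_i+y_i)=:x_i$.

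First I would produce a single common limit. Applying $F_i$ to the differences in \eqref{e:stef:3}, one has $F_iz_{i,n}-F_jz_{j,n}=\thalb\big(z_{i,n}-z_{j,n}+T_iz_{i,n}-T_jz_{j,n}\big)\to 0$; since this difference also converges weakly to $x_i-x_j$, all the $x_i$ coincide. Write $x$ for this common value, so that $x=\thalb(z_i+y_i)$ for every $i\in I$ and hence $mx=\thalb\sum_{i\in I}(z_i+y_i)$.

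Next I would verify the three hypotheses \eqref{e:kati:1}--\eqref{e:kati:3} of Theorem~\ref{t:firm} for the family $(F_i)_{i\in I}$. Hypothesis \eqref{e:kati:1} is $z_{i,n}\weakly z_i$ together with $F_iz_{i,n}\weakly x$, and \eqref{e:kati:3} is the displayed limit above; both are now in hand. For \eqref{e:kati:2} I would compute $\sum_{i\in I}(z_{i,n}-F_iz_{i,n})=\thalb\sum_{i\in I}(z_{i,n}-T_iz_{i,n})\to\thalb\sum_{i\in I}(z_i-y_i)$ by \eqref{e:stef:2}, and then check that this target equals $-mx+\sum_{i\in I}z_i$, which is precisely the identity $-\thalb\sum_{i\in I}(z_i+y_i)+\sum_{i\in I}z_i=\thalb\sum_{i\in I}(z_i-y_i)$ coming from $mx=\thalb\sum_{i\in I}(z_i+y_i)$.

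With the hypotheses confirmed, Theorem~\ref{t:firm} yields $F_iz_i=x$ for every $i\in I$; unwinding, $\thalb z_i+\thalb T_iz_i=\thalb(z_i+y_i)$, i.e.\ $T_iz_i=y_i$, as required. I expect the main obstacle to be bookkeeping rather than anything conceptual: correctly translating the ``balance'' hypotheses \eqref{e:stef:2} and \eqref{e:stef:3} into the firmly nonexpansive balance conditions, in particular extracting the single shared limit $x$ (so that the right-hand side $-mx+\sum_{i\in I}z_i$ of \eqref{e:kati:2} is even well posed) and matching the affine-subspace limit to it.
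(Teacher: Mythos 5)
Your proposal is correct and follows essentially the same route as the paper's proof: define $F_i=\thalb\Id+\thalb T_i$, use \eqref{e:stef:3} and uniqueness of weak limits to show the limits $\thalb(z_i+y_i)$ agree for all $i$, translate \eqref{e:stef:2} into \eqref{e:kati:2}, and invoke Theorem~\ref{t:firm}. No gaps.
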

\begin{proof}
Set $(\forall i\in I)$ $F_i = \thalb \Id +\thalb T_i$. 
Then $F_i$ is firmly nonexpansive 
and $F_iz_{i,n} \weakly \thalb z_i + \thalb y_i$, 
for every $i\in I$. 
By \eqref{e:stef:3}, 
$0 \leftarrow 2F_{i}z_{i,n}-2F_{j}z_{j,n} 
= (z_{i,n}+T_iz_{i,n}) - (z_{j,n}+T_jz_{j,n})
\weakly (z_i+y_i)-(z_j+y_j)$,
for all $i$ and $j$ in $I$.
It follows that 
$x=\thalb z_i + \thalb y_i$ is \emph{independent}
of $i\in I$. 
Furthermore,
\begin{subequations}
\begin{align}
\sum_{i\in I} \big(z_{i,n}-F_iz_{i,n}\big)
&=\sum_{i\in I} \thalb\big(z_{i,n}-T_iz_{i,n}\big)\\
&\to\sum_{i\in I} \thalb\big(z_{i}-y_{i}\big)\\
&=\sum_{i\in I} \big(\thalb z_{i}-\big(x-\thalb z_i\big)\big)\\
&=-mx + \sum_{i\in I} z_i. 
\end{align}
\end{subequations}
Therefore, the conclusion follows from 
Theorem~\ref{t:firm}. 
\end{proof}

\section{Application to Douglas-Rachford splitting}

\label{s:4}

In this section, we assume that 
$A$ and $B$ are maximally monotone operators on $\HH$ such that
\begin{equation}
\zer(A+B)= (A+B)^{-1}(0)\neq\varnothing.
\end{equation}
We set 
\begin{equation}
T = \thalb\Id +\thalb R_BR_A = J_B(2J_A-\Id) + (\Id-J_A),
\end{equation}
which is the Douglas-Rachford splitting operator. 
See \cite{BC} for further information on this algorithm, and also
\cite{BorSims} for some results for operators that are not maximally
monotone. 
It is not hard to check
(this is implicit in \cite{LionsMercier} and \cite{EckBer}; 
see also \cite[Proposition~25.1(ii)]{BC}) 
that
\begin{equation}
J_A\big(\Fix T\big) = \zer(A+B).
\end{equation}
Now let $z_0\in\HH$ and 
define the sequence $(z_n)_\nnn$ by 
\begin{equation}
(\forall\nnn)\quad z_{n+1} = Tz_n.
\end{equation}
This sequence is very useful in determining
a zero of $A+B$ as the next result illustrates.

\begin{fact}[Lions-Mercier]
\emph{\cite{LionsMercier}}
\label{f:LionsMercier}
The sequence $(z_n)_\nnn$ converges weakly to some 
point $z\in X$ such that ${z}\in\Fix T$ and $J_A{z}\in\zer(A+B)$.
Moreover, the sequence $(J_Az_n)_\nnn$ is bounded, and every weak
cluster point of this sequence belongs to $\zer(A+B)$.
\end{fact}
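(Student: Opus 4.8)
The plan is to combine a classical Fej\'er-monotonicity argument with the Multi-Operator Demiclosedness Principle (Theorem~\ref{t:firm}) applied to the pair $(J_A,J_B)$. First I would record the structural facts about $T$. Since $R_A$ and $R_B$ are nonexpansive (being reflected resolvents of the firmly nonexpansive $J_A$, $J_B$), their composition $R_BR_A$ is nonexpansive, so $T=\thalb\Id+\thalb R_BR_A$ is firmly nonexpansive. Writing $a_n:=J_Az_n$ and $b_n:=J_B(R_Az_n)=J_B(2a_n-z_n)$, the second expression $T=J_B(2J_A-\Id)+(\Id-J_A)$ recorded above gives $z_{n+1}=Tz_n=b_n+z_n-a_n$, so that $z_n-Tz_n=a_n-b_n$. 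Because $\zer(A+B)\neq\varnothing$ and $J_A(\Fix T)=\zer(A+B)$, necessarily $\Fix T\neq\varnothing$; I fix $f\in\Fix T$.

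Second, I would derive boundedness and asymptotic regularity. Firm nonexpansiveness applied to the pair $(z_n,f)$, together with $Tf=f$, yields $\|z_{n+1}-f\|^2+\|z_n-Tz_n\|^2\leq\|z_n-f\|^2$. Hence $(\|z_n-f\|)_\nnn$ is nonincreasing, so $(z_n)_\nnn$ is Fej\'er monotone with respect to $\Fix T$ and in particular bounded, and telescoping gives $\sum_\nnn\|z_n-Tz_n\|^2\leq\|z_0-f\|^2<\infty$, so $z_n-Tz_n=a_n-b_n\to0$. Since $J_A$ is nonexpansive, $(a_n)_\nnn$ is bounded as well.

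Third, and this is the heart of the matter, I would identify the weak cluster points using Theorem~\ref{t:firm} with $m=2$, $F_1=J_A$, $F_2=J_B$, and the auxiliary sequences $z_{1,n}=z_n$, $z_{2,n}=2a_n-z_n=R_Az_n$. Let $a$ be any weak cluster point of $(a_n)_\nnn$, say $a_{n_k}\weakly a$, and, passing to a further subsequence, $z_{n_k}\weakly z$. Along this subsequence $z_{1,n_k}\weakly z$ and $z_{2,n_k}\weakly 2a-z$, while $F_1z_{1,n_k}=a_{n_k}\weakly a$ and $F_2z_{2,n_k}=b_{n_k}\weakly a$ (using $a_n-b_n\to0$), so both resolvent sequences share the common weak limit $x:=a$, giving \eqref{e:kati:1}; condition \eqref{e:kati:3} is exactly $a_n-b_n\to0$; and \eqref{e:kati:2} holds since $(z_{1,n}-F_1z_{1,n})+(z_{2,n}-F_2z_{2,n})=a_n-b_n\to0=-2a+z+(2a-z)$. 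The theorem then yields $J_Az=a$ and $J_B(2a-z)=a$, i.e.\ $z-a\in Aa$ and $a-z\in Ba$; adding gives $0\in(A+B)a$, so $a\in\zer(A+B)$, which is the ``moreover'' claim. The same two identities give $Tz=z+a-a=z$, so $z\in\Fix T$; thus every weak cluster point of $(z_n)_\nnn$ lies in $\Fix T$.

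Finally, since $(z_n)_\nnn$ is Fej\'er monotone with respect to the nonempty set $\Fix T$ and all of its weak cluster points lie in $\Fix T$, Opial's lemma forces $z_n\weakly z$ for a single $z\in\Fix T$, and then $J_Az\in\zer(A+B)$ via $J_A(\Fix T)=\zer(A+B)$. The main obstacle here is precisely the failure of weak continuity of $J_A$ illustrated by Zarantonello's Example~\ref{ex:Zara}, which forbids taking weak limits inside the resolvents directly; Theorem~\ref{t:firm} is exactly the device that circumvents this, and the only genuine care needed is in choosing $z_{1,n}$ and $z_{2,n}$ so that hypotheses \eqref{e:kati:1}--\eqref{e:kati:3} are verifiable, which the DR identity $z_n-Tz_n=a_n-b_n$ makes immediate.
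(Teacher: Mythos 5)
The paper offers no proof of this statement at all---it is quoted as a Fact from \cite{LionsMercier}---so there is no internal argument to compare yours against; what you wrote is correct and self-contained modulo the identity $J_A(\Fix T)=\zer(A+B)$ (which the paper also quotes without proof) and the standard principle that a Fej\'er monotone sequence whose weak cluster points all lie in the target set converges weakly to a point of that set. The distinctive feature of your route is that you invoke the paper's Theorem~\ref{t:firm} at the level of subsequences: your verification of \eqref{e:kati:1}--\eqref{e:kati:3} with $F_1=J_A$, $F_2=J_B$, $z_{1,n}=z_n$, $z_{2,n}=R_Az_n$ is essentially the computation the paper performs later in its proof of Fact~\ref{f:Svaiter}, where it is used only to upgrade the cluster-point statement to full weak convergence of $(J_Az_n)$. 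The classical argument would instead obtain $Tz=z$ for weak cluster points from Browder's demiclosedness principle applied to the single nonexpansive operator $T$ (using $z_n-Tz_n\to 0$), and would treat the cluster points of $(J_Az_n)$ by a separate weak--strong closedness argument on the graphs of $A$ and $B$; your version unifies both steps in one application of Theorem~\ref{t:firm}, and, pushed one step further (once $z_n\weakly z$ is known, any weak cluster point $a$ of $(J_Az_n)$ satisfies $a=J_Az$ and is therefore unique), it yields Svaiter's theorem as a byproduct. One presentational remark: to conclude that \emph{every} weak cluster point of $(z_n)$ lies in $\Fix T$, you should start from an arbitrary subsequence $z_{m_k}\weakly z$ and then extract a further subsequence along which $(J_Az_{m_k})$ converges weakly (possible by boundedness); your extraction goes in the opposite order, which as written only covers cluster points of $(z_n)$ reached along subsequences on which $(J_Az_n)$ already converges. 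The argument is symmetric, so this is a one-line fix rather than a gap.
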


Since $J_A$ is in general \emph{not} sequentially weakly continuous
(see Example~\ref{ex:Zara}), 
it is not obvious whether or not $J_Az_n\weakly J_A{z}$.
However, recently Svaiter provided a relatively complicated proof
that in fact weak convergence does hold.
As an application, we rederive the most useful instance of his
result with a considerably simpler and more conceptual proof. 

\begin{fact}[Svaiter]
\label{f:Svaiter}
\emph{\cite{Svai10}}
The sequence $(J_Az_n)_\nnn$ converges weakly to $J_Az$.\\
\end{fact}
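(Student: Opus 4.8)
The plan is to apply the single-operator firm nonexpansiveness principle (Corollary~\ref{c:fnp}) to the firmly nonexpansive resolvent $F = J_A$, using the already-established convergence facts about the Douglas--Rachford iteration from Fact~\ref{f:LionsMercier}. The key observation is that $J_A$ itself is firmly nonexpansive, and by Fact~\ref{f:LionsMercier} we already know $z_n \weakly z$ with $z \in \Fix T$. So I would set $F = J_A$, take the sequence $(z_n)_\nnn$ as given, and try to identify weak limits and the correct affine subspaces $C,D$ so that Corollary~\ref{c:fnp} forces $J_A z_n \weakly J_A z$.

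First I would record what Fact~\ref{f:LionsMercier} gives us. The sequence $(J_A z_n)_\nnn$ is bounded, so it has weak cluster points, and each such cluster point lies in $\zer(A+B)$. The goal is to show there is only one cluster point, namely $J_A z$. So I would pass to an arbitrary weakly convergent subsequence $J_A z_{k_n} \weakly x$ and aim to prove $x = J_A z$; since the original sequence is bounded, uniqueness of the cluster point then yields $J_A z_n \weakly J_A z$. Along this subsequence I now have the two weak-limit hypotheses of Corollary~\ref{c:fnp} with $F = J_A$: namely $z_{k_n} \weakly z$ and $J_A z_{k_n} \weakly x$.

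Next I must supply the two mutually orthogonal affine subspaces $C$ and $D$ together with the two strong-convergence conditions $J_A z_{k_n} - P_C J_A z_{k_n} \to 0$ and $(z_{k_n} - J_A z_{k_n}) - P_D(z_{k_n} - J_A z_{k_n}) \to 0$. The natural guess, motivated by the structure of the fixed-point set, is to take $C$ and $D$ to be singletons or translates built from $z$ and $J_A z$. The crucial structural input is that the Douglas--Rachford operator satisfies $Tz_n - z_n \to 0$ (firm nonexpansiveness of $T$ forces the displacement to vanish strongly since $z_n$ converges weakly to a fixed point), which gives strong control over $R_B R_A z_n$ and hence couples $J_A z_n$ with $z_n - J_A z_n = (\Id - J_A)z_n$. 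I expect that unravelling $Tz_n - z_n \to 0$ in terms of $J_A z_n$ and $J_B(2J_A - \Id)z_n$ produces exactly one strong relation, which I would assign to the $C$-condition (taking $C$ to be an appropriate affine subspace, perhaps all of $X$ in one slot), while the second condition is supplied by a displacement estimate in the $B$-resolvent, with $D$ chosen as the orthogonal complement translate so that $D - D = (C-C)^\perp$.

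The hard part will be choosing $C$ and $D$ so that the two orthogonality and strong-convergence requirements are simultaneously met; this is where the real content lies, since the flexibility of the two orthogonal affine subspaces in Corollary~\ref{c:fnp} is precisely what must be exploited, and the cleanest choice is not obvious a priori. Once $C$, $D$, and the two strong limits are in place, Corollary~\ref{c:fnp} immediately delivers $x = J_A z$ (together with the membership relations $x \in C$ and $z \in x + D$), completing the uniqueness-of-cluster-point argument and hence the weak convergence $J_A z_n \weakly J_A z$.
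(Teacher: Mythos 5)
Your outer scaffolding matches the paper's: reduce to showing that every weak cluster point $x$ of the bounded sequence $(J_Az_n)_\nnn$ equals $J_Az$, and extract the needed strong-convergence input from the asymptotic regularity $z_n-Tz_n\to 0$. But the core of your plan --- applying the \emph{single-operator} principle Corollary~\ref{c:fnp} to $F=J_A$ alone, with the sequence $(z_{k_n})_\nnn$ in $X$ --- cannot be completed, and the step you defer (``the hard part will be choosing $C$ and $D$'') is precisely where it breaks. The only strong information the iteration provides is
\begin{equation*}
z_n - Tz_n \;=\; J_Az_n - J_B(R_Az_n) \;\to\; 0,
\end{equation*}
which couples $J_A$ evaluated along $(z_n)$ with $J_B$ evaluated along the \emph{different} sequence $(R_Az_n)$. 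No pair of affine subspaces $C,D\subseteq X$ turns this into the two hypotheses of Corollary~\ref{c:fnp} for $F=J_A$: if $C=X$, then $D-D=(C-C)^\perp=\{0\}$ forces $D$ to be a singleton, so the $D$-condition demands \emph{strong} convergence of $z_{k_n}-J_Az_{k_n}$, which is not available (and would essentially presuppose the conclusion); if $C$ is a proper affine subspace $x+V$, the $C$-condition demands strong convergence of $P_{V^\perp}J_Az_{k_n}$, for which there is likewise no source. So the single-operator route in $X$ is a dead end, not merely an unfinished computation.

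The missing idea is to bring in $J_B$ and a second sequence, i.e., to use the two-operator demiclosedness principle. The paper sets $y_n=R_Az_n$, so that $y_{k_n}\weakly y:=2x-z$ and, by asymptotic regularity, $J_By_{k_n}\weakly x$ as well. The algebraic identity $(z_{k_n}-J_Az_{k_n})+(y_{k_n}-J_By_{k_n})=J_Az_{k_n}-J_By_{k_n}\to 0=-2x+z+y$ then verifies hypotheses \eqref{e:kati:2} and \eqref{e:kati:3} of Theorem~\ref{t:firm} with $m=2$, $F_1=J_A$, $F_2=J_B$, and the sequences $(z_{k_n})_\nnn$, $(y_{k_n})_\nnn$; the conclusion $J_Az=J_By=x$ follows. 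This \emph{is} your intended application of Corollary~\ref{c:fnp}, but carried out in the product space $\bX=X\times X$ with $\bF=J_A\times J_B$, $\bC$ the diagonal, and $\bD$ a translate of the anti-diagonal --- the passage to two operators and two sequences is exactly what makes admissible choices of the orthogonal affine subspaces exist.
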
 
\begin{proof}
By Fact~\ref{f:LionsMercier}, 
\begin{equation}
z_n\weakly z\in\Fix T.
\end{equation}
Since $J_A$ is (firmly) nonexpansive and $(z_n)_\nnn$ is bounded,
the sequence $(J_Az_n)_\nnn$ is bounded as well.
Let $x$ be an arbitrary weak cluster point of $(J_Az_n)_\nnn$,
say 
\begin{equation}
J_Az_{k_n} \weakly x \in \zer(A+B)
\end{equation}
by Fact~\ref{f:LionsMercier}. 
Set $(\forall\nnn)$ $y_n=R_Az_n$.
Then
\begin{equation}
y_{k_n}\weakly y = 2x-z\in X. 
\end{equation}
It is well known that the sequence of iterates of any firmly nonexpansive operator
with fixed points is asymptotically regular \cite{BruckReich}; thus, 
\begin{equation}
J_Az_n - J_By_n = z_n-Tz_n \to 0
\end{equation}
and hence 
\begin{equation}
J_By_{k_n} \weakly x.
\end{equation}
Next,
\begin{subequations}
\begin{align}
0 & \leftarrow J_Az_{k_n} - J_By_{k_n} \\
&= z_{k_n} - J_Az_{k_n} + R_Az_{k_n} - J_By_{k_n}\\
&= z_{k_n}-J_Az_{k_n} + y_{k_n}-J_By_{k_n}\\
&\weakly z+y-2x.
\end{align}
\end{subequations}
To summarize,
\begin{subequations}
\begin{align}
(z_{k_n},y_{k_n})\weakly (z,y)
\quad\text{and}\quad
(J_Az_{k_n},J_By_{k_n})\weakly (x,x),\\
(z_{k_n}-J_Az_{k_n}) + (y_{k_n}-J_By_{k_n})
\to -2x + z+ y = 0,\\
J_Az_{k_n}-J_By_{k_n}\to 0.
\end{align}
\end{subequations}
By Theorem~\ref{t:firm}, $J_Az=J_By=x$.
Hence $J_Az_{k_n} \weakly J_Az$. 
Since $x$ was an arbitrary weak cluster point of the bounded
sequence $(J_Az_n)_\nnn$, we conclude that $J_Az_n\weakly J_Az$.
\end{proof}

\begin{remark}
Generalizations of Fact~\ref{f:Svaiter} appear in
\cite{Svai10}, \cite{BC}, and a forthcoming preprint by 
Dr.~Patrick~L.~Combettes.
\end{remark}

\section*{Acknowledgment}
The author was partially supported by the Natural Sciences and
Engineering Research Council of Canada and
by the Canada Research Chair Program.


\begin{thebibliography}{99}

\bibitem{Bau09}
H.H.\ Bauschke,
A note on the paper by Eckstein and Svaiter on
``General projective splitting methods for sums
of maximal monotone operators'',
\emph{SIAM Journal on Control and Optimization},
vol.~48, pp.~2513--2515, 2009.



\bibitem{BC}
H.H.\ Bauschke and P.L.\ Combettes,
\emph{Convex Analysis and Monotone Operator Theory in Hilbert Spaces},
Springer-Verlag, 2011. 

\bibitem{BorSims}
J.M.\ Borwein and B.\ Sims,
The Douglas-Rachford algorithm in the absence of convexity,
in 
\emph{Fixed-Point algorithms for Inverse Problems in 
Science and Engineering}, 
H.H.\ Bauschke, R.S.\ Burachik, 
P.L.\ Combettes, V.\ Elser, D.R.\ Luke, and
H.\ Wolkowicz (editors), Springer-Verlag, 2011, to appear.

\bibitem{BorVan}
J.M.\ Borwein and J.D.\ Vanderwerff,
\emph{Convex Functions},
Cambridge University Press, 2010. 

\bibitem{Brezis}
H. Br\'ezis,
\emph{Operateurs Maximaux Monotones et
Semi-Groupes de Contractions dans les Espaces de Hilbert},
North-Holland/Elsevier, 1973. 

\bibitem{Browder68}
F.E.\ Browder,
Semicontractive and semiaccretive nonlinear
mappings in Banach spaces,
\emph{Bulletin of the AMS},
vol.~74, pp.~660--665, 1968.

\bibitem{BruckReich}
R.E.\ Bruck and S.\ Reich,
Nonexpansive projections and resolvents
of accretive operators in Banach spaces,
\emph{Houston Journal of Mathematics},
vol.~3, pp.~459--470, 1977.


\bibitem{BurIus}
R.S.\ Burachik and A.N.\ Iusem,
\emph{Set-Valued Mappings and Enlargements of Monotone Operators},
Springer-Verlag, 2008. 


\bibitem{EckBer}
J.\ Eckstein and D.P.\ Bertsekas,
On the Douglas-Rachford splitting method and the
proximal point algorithm for maximal monotone operators,
\emph{Mathematical Programming},
vol.~55, pp.~293--318, 1992. 


\bibitem{GoeKir}
K.\ Goebel and W.A.\ Kirk,
\emph{Topics in Metric Fixed Point Theory},
Cambridge University Press,
1990. 

\bibitem{GoeRei} 
K.\ Goebel and S.\ Reich,
\emph{Uniform Convexity, Hyperbolic Geometry,
and Nonexpansive Mappings},
Marcel Dekker, 
1984.

\bibitem{LionsMercier}
P.-L.\ Lions and B.\ Mercier,
Splitting  algorithms for the sum of two
nonlinear operators,
\emph{SIAM Journal on Numerical Analysis},
vol.~16, pp.~964--979, 1979. 

\bibitem{Minty}
G.J.\ Minty,
Monotone (nonlinear) operators in Hilbert spaces,
\emph{Duke Mathematical Journal},
vol.~29, pp.~341--346, 1962. 

\bibitem{RockWets}
R.T.\ Rockafellar and R.J-B Wets,
\emph{Variational Analysis}, 2nd printing, 
Springer-Verlag, 2004. 

\bibitem{Si}
S.\ Simons,  
\emph{Minimax and Monotonicity},
Springer-Verlag, 1998.

\bibitem{Si2}
S.\ Simons, \emph{From Hahn-Banach to Monotonicity},
Springer-Verlag, 2008.

\bibitem{Svai10}
B.F.\ Svaiter,
On weak convergence of the Douglas-Rachford method,
\emph{SIAM Journal on Control and Optimization},
vol.~49, pp.~280--287, 2011.

\bibitem{Zalinescu}
{C.\ Z\u{a}linescu},
\emph{Convex Analysis in General Vector Spaces}, World Scientific
Publishing, 2002.

\bibitem{Zara}
E.H.\ Zarantonello, 
Projections on convex sets in Hilbert space and spectral theory I. 
Projections on convex sets, in 
\emph{Contributions to Nonlinear Functional Analysis}, 
E.H.\ Zarantonello (editor), pp. 237–341, Academic Press, 1971.
\end{thebibliography}
\end{document}